\documentclass[11pt]{article}
\usepackage[english,francais]{babel}
\usepackage[utf8]{inputenc} 
\usepackage{latexsym,amsmath,amssymb,textcomp}
\usepackage[all]{xy}
\usepackage[nottoc, notlof, notlot]{tocbibind}
\usepackage{geometry} 
\geometry{a4paper} 
\usepackage{graphicx} 
\usepackage{mathtools}
\usepackage{amsthm}
\usepackage{amsmath}
\usepackage{booktabs}
\usepackage{array} 
\usepackage{paralist}
\usepackage{verbatim} 
\usepackage{subfig} 
\usepackage{ stmaryrd }
\usepackage{hyperref}
\usepackage{fancyhdr} 
\title{Quasi-hereditary property of double Burnside algebras.\\
Propriété quasi-héréditaire des algèbres de Burnside doubles.}
\author{Baptiste Rognerud}
\begin{document}
\maketitle
\theoremstyle{plain}
\newtheorem{theo}{Theorem}[section]
\newtheorem{theox}{Theorem}[section]
\renewcommand{\thetheox}{\Alph{theox}}
\newtheorem{prop}[theo]{Proposition}
\newtheorem{lemma}[theo]{Lemma}
\newtheorem{coro}[theo]{Corollary}
\newtheorem{question}[theo]{Question}
\newtheorem*{notations}{Notations}
\theoremstyle{definition}
\newtheorem{de}[theo]{Definition}
\newtheorem{dex}[theox]{Definition}
\theoremstyle{remark}
\newtheorem{ex}[theo]{Example}
\newtheorem{re}[theo]{Remark}
\newtheorem{res}[theo]{Remarks}
\renewcommand{\labelitemi}{$\bullet$}
\newcommand{\comu}{co\mu}
\newcommand{\mo}{\ensuremath{\hbox{mod}}}
\newcommand{\Mo}{\ensuremath{\hbox{Mod}}}
\newcommand{\decale}[1]{\raisebox{-2ex}{$#1$}}
\newcommand{\decaleb}[2]{\raisebox{#1}{$#2$}}
\newcommand\Ind{{\rm Ind}}
\newcommand\Inf{{\rm Inf}}
\newcommand\Res{{\rm Res}}
\newcommand\Def{{\rm Def}}
\newcommand\Iso{{\rm Iso}}
\newcommand\field {{k}}
\newcommand\Mod {{\rm Mod}}
\newcommand\Hom {{\rm Hom}}
\newcommand\mb{\hbox{-}}
\selectlanguage{english}
\begin{abstract}
In this short note we investigate some consequences of the \emph{vanishing} of simple biset functors. As corollary, if there is no non-trivial vanishing of simple biset functors (e.g. if the group $G$ is commutative), then we show that $kB(G,G)$ is a \emph{quasi-hereditary} algebra in characteristic zero. In general, this not true without the non-vanishing condition, as over a field of characteristic zero, the double Burnside algebra of the alternating group of degree $5$ has infinite global dimension. \\
\selectlanguage{francais}
\begin{center}{\bf  R\'esum\'e}\end{center}
Dans cette note on s'intéresse à quelques conséquences du phénomène dit de \emph{disparition} des foncteurs à bi-ensembles simples. On démontre que dans le cas où il n'y a pas de disparitions non triviales de foncteurs simples (par exemple si le groupe est commutatif) alors l'algèbre de Burnside double en caractéristique zéro est quasi-héréditaire. Sans l'hypothèse de non-disparitions triviales, ce résultat est en général faux. En effet, l'algèbre de Burnside double du groupe alterné de degré $5$ en caractéristique zéro est de dimension globale infinie.
\end{abstract}
\selectlanguage{english}
\par\noindent
{\it{\footnotesize   Key words:  Finite group. Biset functor. Quasi-hereditary algebra.}}
\par\noindent
{\it {\footnotesize A.M.S. subject classification: 19A22, 20C99, 16G10,18E10.}}
\begin{notations}
Let $k$ be a field. We denote by $\mathcal{C}_{k}$ the biset category. That is the category whose objects are finite groups and morphisms are given by the double Burnside module (see Definition $3.1.1$ of \cite{bouc_biset}).
For a finite group $G$, we denote by $\Sigma(G)$ the full subcategory of $\mathcal{C}_{k}$ consisting of the subquotients of $G$. If $\mathcal{D}$ is a $k$-linear subcategory of $\mathcal{C}_{k}$, we denote by $\mathcal{F}_{\mathcal{D},k}$ the category of $k$-linear functors from $\mathcal{D}$ to $k\mb\Mod$. If $L$ is a subquotient of $K$, we write $L\sqsubseteq K$ and if it is a proper subquotient, we write $L\sqsubset K$. If $V$ and $W$ are objects in the same abelian category, we denote by $[V:W]$ the number of subquotients of $V$ isomorphic to $W$. 
\end{notations}
\section{Evaluation of functors.}
Let us first recall some basic facts about the category of biset functors. Let $\mathcal{D}$ be an admissible subcategory of $\mathcal{C}_{k}$ in the sense of Definition $4.1.3$ of \cite{bouc_biset}. The category $\mathcal{D}$ is a skeletally small $k$-linear category, so the category of biset functors is an abelian category. The representable functors, also called Yoneda functors, are projective, so this category has \emph{enough projective}. The simple functors are in bijection with the isomorphism classes of pairs $(H,V)$ where $H$ is an object of $\mathcal{D}$ and $V$ is a $k\mathrm{Out}(H)$-simple module (see Theorem $4.3.10$ of \cite{bouc_biset}). 
\newline A biset functor is called \emph{finitely generated} if it is a quotient of a \emph{finite} direct sum of representable functors. In particular, the simple biset functors and the representable functors are finitely generated. As in the case of modules over a ring, the choice axiom has for consequence the existence of maximal subfunctor for finitely generated biset functors. If $F$ is a biset functor, the intersection of all its maximal subfunctors is called the radical of $F$ and denoted $\mathrm{Rad}(F)$. 
\newline If $G$ is an object of $\mathcal{D}$, then there is an evaluation functor $ev_{G} : \mathcal{F}_{\mathcal{D},k} \to \mathrm{End}_{\mathcal{D}}(G)\mb\Mod$ sending a functor to its value at $G$. It is obviously an exact functor and it is well known that it sends a simple functor to $0$ or a simple $\mathrm{End}_{\mathcal{D}}(G)$-module. It turns out that the fact that a simple functor vanishes at $G$ has some consequences for the functors having this simple as quotient. 
\begin{prop}\label{rad}
Let $F\in\mathcal{F}_{\mathcal{D},k}$ be a finitely generated functor and let $G\in Ob(\mathcal{D})$. Then
\begin{enumerate}
\item $\mathrm{Rad}(F(G))\subseteq [\mathrm{Rad}(F)](G)$
\item If none of the simple quotients of $F$ vanishes at $G$, then $\mathrm{Rad}(F(G)) = [\mathrm{Rad}(F)](G)$. 
\end{enumerate}
\end{prop}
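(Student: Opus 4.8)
The plan is to treat the two inclusions separately, using throughout that $ev_{G}$ is exact, that it commutes with arbitrary limits and colimits (which in the functor category $\mathcal{F}_{\mathcal{D},k}$ are computed objectwise), and that it sends a simple functor to $0$ or to a simple $\mathrm{End}_{\mathcal{D}}(G)$-module. I would begin by noting that $F(G)$ is finite-dimensional over $k$: since $F$ is a quotient of a finite direct sum of representable functors $\mathrm{Hom}_{\mathcal{D}}(H_{i},-)$, the space $F(G)$ is a quotient of $\bigoplus_{i}\mathrm{Hom}_{\mathcal{D}}(H_{i},G)$, which is finite-dimensional; in particular $\mathrm{Rad}(F(G))$ is the intersection of the finitely many maximal submodules of the finite-dimensional $\mathrm{End}_{\mathcal{D}}(G)$-module $F(G)$.

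For item (1), take a maximal subfunctor $M$ of $F$, so that $F/M$ is simple. Applying $ev_{G}$ to $0\to M\to F\to F/M\to 0$ shows that $M(G)=\ker\bigl(F(G)\to(F/M)(G)\bigr)$ and that $(F/M)(G)$ is either $0$ or simple; in both cases $M(G)\supseteq\mathrm{Rad}(F(G))$, because $M(G)$ is then either all of $F(G)$ or a maximal submodule of it. Writing $\mathrm{Rad}(F)$ as the kernel of the canonical map $F\to\prod_{M}F/M$ and using that $ev_{G}$ preserves this kernel and this product, I would conclude $[\mathrm{Rad}(F)](G)=\bigcap_{M}M(G)\supseteq\mathrm{Rad}(F(G))$.

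For item (2) I would first record that, $F$ being finitely generated, $F/\mathrm{Rad}(F)$ is semisimple and is the direct sum of the simple quotients of $F$: the functor $F$ is a quotient of a finite direct sum $P$ of representable functors, the algebra $\mathrm{End}(P)$ is finite-dimensional over $k$ so that $P$ decomposes as a finite direct sum of projective covers of simple functors and $P/\mathrm{Rad}(P)$ is semisimple, and since the surjection $P\to F$ sends $\mathrm{Rad}(P)$ into $\mathrm{Rad}(F)$, the functor $F/\mathrm{Rad}(F)$ is a quotient of $P/\mathrm{Rad}(P)$, hence semisimple. Evaluating at $G$ and invoking the hypothesis, $F(G)/[\mathrm{Rad}(F)](G)=(F/\mathrm{Rad}(F))(G)$ is a finite direct sum of nonzero simple $\mathrm{End}_{\mathcal{D}}(G)$-modules, hence semisimple, so $[\mathrm{Rad}(F)](G)\supseteq\mathrm{Rad}(F(G))$; combined with (1) this yields the equality.

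The main obstacle is the reverse inclusion in (2), and it reduces to two points that must be argued carefully. The first is that evaluation commutes with the possibly infinite intersection defining the radical; this is routine once that radical is written as a kernel, since all limits in $\mathcal{F}_{\mathcal{D},k}$ are objectwise. The second, which carries the real content, is that a finitely generated biset functor has semisimple head equal to the sum of its simple quotients — the crucial remark being that the endomorphism algebra of a finite direct sum of representable functors is a finite-dimensional $k$-algebra, so that the usual theory of finite-dimensional algebras applies to $P$. Granting this, the non-vanishing hypothesis on the simple quotients of $F$ is exactly what prevents the semisimplicity of the head from being destroyed by evaluation at $G$, which is the whole point of the statement.
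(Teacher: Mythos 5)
Your item (1) is correct and is essentially the paper's argument: exactness of evaluation shows that for a maximal subfunctor $M$ the image $M(G)$ is either all of $F(G)$ (when $F/M$ vanishes at $G$) or a maximal submodule of $F(G)$, and since limits in $\mathcal{F}_{\mathcal{D},k}$ are objectwise the identification $[\mathrm{Rad}(F)](G)=\bigcap_{M}M(G)$ is legitimate, giving $\mathrm{Rad}(F(G))\subseteq[\mathrm{Rad}(F)](G)$.

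Your item (2) has a genuine gap. You correctly argue that $F/\mathrm{Rad}(F)$ is semisimple (lifting to a finite sum of representables with finite-dimensional endomorphism algebra is a clean way to see this), hence $F(G)/[\mathrm{Rad}(F)](G)$ is semisimple. But from semisimplicity of that quotient you can only deduce that $[\mathrm{Rad}(F)](G)$ \emph{contains} $\mathrm{Rad}(F(G))$ --- the radical is the smallest submodule with semisimple quotient --- and that is precisely the inclusion already furnished by item (1). Your phrase ``combined with (1) this yields the equality'' therefore combines the same inclusion with itself: the reverse inclusion $[\mathrm{Rad}(F)](G)\subseteq\mathrm{Rad}(F(G))$ is never established. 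Note also that the non-vanishing hypothesis plays no real role in what you actually prove (the quotient would be semisimple, just smaller, even if some $S_{i}(G)$ were zero), which is another symptom that the argument is running in the wrong direction. The missing point is to show that $F(G)$ has no maximal submodules other than those of the form $M(G)$. The paper does this directly: given a maximal submodule $N\subseteq F(G)$, let $\overline{N}$ be the subfunctor of $F$ generated by $N$; since $N$ is already an $\mathrm{End}_{\mathcal{D}}(G)$-submodule one has $\overline{N}(G)=N$. Choose a maximal subfunctor $M$ of $F$ with $\overline{N}\subseteq M\subset F$. Then $N\subseteq M(G)$, and because the simple quotient $F/M$ does not vanish at $G$ the inclusion $M(G)\subset F(G)$ is strict, so maximality of $N$ forces $M(G)=N$. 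Thus every maximal submodule of $F(G)$ is the evaluation of a maximal subfunctor, which gives $[\mathrm{Rad}(F)](G)=\bigcap_{M}M(G)\subseteq\bigcap_{N}N=\mathrm{Rad}(F(G))$. Your semisimple-head observation is a true and useful remark, but it cannot substitute for this step.
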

\begin{proof}
Let $M$ be a maximal subfunctor of $F$. Then $M(G)$ is a maximal submodule of $F(G)$ if the simple quotient $F/M$ does not vanish at $G$ and $M(G)=F(G)$ otherwise. For the second part, if $N$ is a maximal submodule of $F(G)$, let $\overline{N}$ be the subfunctor of $F$ generated by $N$. There is a maximal subfunctor $M$ of $F$ such that $\overline{N}\subseteq M\subset F$. We have $\overline{N}(G)=N\subseteq M(G)\subset F(G)$. By maximality, $M(G)=N$. The result follows. 
\end{proof}
\begin{re}
In Section $9$ of $\cite{bst1}$, the Authors gave some conditions for the fact that the evaluation of the radical of the so-called standard functor is the radical of the evaluation. The elementary result of Proposition \ref{rad} gives new lights on this section. Indeed, Proposition $9.1$ \cite{bst1} gives a sufficient condition for the non vanishing of the simple quotients of these standard functors. \end{re}
Over a field, the category of finitely generated projective biset functors is Krull-Schmidt in the sense of \cite{krause_ks} (Section $4$), so every finitely generated biset functor has a projective cover. 
\begin{coro}\label{eval}
Let $F\in\mathcal{F}_{\mathcal{D},k}$ be a finitely generated functor and let $G\in Ob(\mathcal{D})$. Then,
\begin{enumerate}
\item If $F$ has a unique quotient $S$, and $S(G)\neq 0$, then $F(G)$ is an indecomposable $\mathrm{End}_{\mathcal{D}}(G)$-module.
\item If $P$ is an indecomposable projective biset functor such that $\mathrm{Top}(P)(G)\neq 0$, then $P(G)$ is an indecomposable projective $\mathrm{End}_{\mathcal{D}}(G)$-module. 
\end{enumerate}
\end{coro}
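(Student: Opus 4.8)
The plan is to deduce both statements from Proposition \ref{rad}(2) and the exactness of the evaluation functor $ev_{G}$, supplemented for the second part by the observation that an indecomposable projective biset functor is a direct summand of the representable functor $\mathcal{D}(G,-)$ precisely when its top does not vanish at $G$.

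For (1), I would first unwind the hypothesis: $F$ having a unique quotient $S$ means that $\mathrm{Rad}(F)$ is the unique maximal subfunctor of $F$ and that $F/\mathrm{Rad}(F)\cong S$; in particular $S$ is the only simple quotient of $F$. Since $S(G)\neq 0$, no simple quotient of $F$ vanishes at $G$, so Proposition \ref{rad}(2) gives $[\mathrm{Rad}(F)](G)=\mathrm{Rad}(F(G))$. Applying the exact functor $ev_{G}$ to $0\to\mathrm{Rad}(F)\to F\to S\to 0$ then yields $F(G)/\mathrm{Rad}(F(G))\cong S(G)$. Because $ev_{G}$ sends a simple functor to $0$ or to a simple module and $S(G)\neq 0$, the module $S(G)$ is simple; and $F(G)\neq 0$ since it surjects onto $S(G)$. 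As $F$ is finitely generated and $\mathrm{End}_{\mathcal{D}}(G)=kB(G,G)$ is finite-dimensional over $k$, the module $F(G)$ is a nonzero finite-dimensional module with simple top, hence indecomposable.

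For (2), note that $P$, being indecomposable projective, has a simple top $\mathrm{Top}(P)=S$, so the hypothesis $\mathrm{Top}(P)(G)\neq 0$ together with part (1) already shows that $P(G)$ is an indecomposable $\mathrm{End}_{\mathcal{D}}(G)$-module. It remains to prove projectivity. By the Yoneda lemma $\mathrm{Hom}_{\mathcal{F}_{\mathcal{D},k}}(\mathcal{D}(G,-),S)\cong S(G)\neq 0$; since $\mathcal{D}(G,-)$ is finitely generated projective and the category of such is Krull--Schmidt, it decomposes as a finite direct sum of indecomposable projectives, at least one summand of which has top isomorphic to $S$. Such a summand is isomorphic to the projective cover of $S$, i.e. to $P$, so $P$ is a direct summand of $\mathcal{D}(G,-)$. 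Evaluating at $G$, we conclude that $P(G)$ is a direct summand of $ev_{G}(\mathcal{D}(G,-))=\mathcal{D}(G,G)=\mathrm{End}_{\mathcal{D}}(G)$ with its regular left module structure, hence projective.

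The exactness arguments and the Yoneda computation are routine; the one step deserving care is in part (2), where I must check that $\mathrm{Top}(P)(G)\neq 0$ is genuinely equivalent to $P$ occurring as a summand of $\mathcal{D}(G,-)$ — this comes from comparing multiplicities via $\dim_{\mathrm{End}(S)}\mathrm{Hom}_{\mathcal{F}_{\mathcal{D},k}}(\mathcal{D}(G,-),S)=\dim_{\mathrm{End}(S)}S(G)$ — and that the evaluation of a representable functor really is the free rank-one $\mathrm{End}_{\mathcal{D}}(G)$-module, so that its direct summands are exactly the finitely generated projective modules. I do not anticipate any genuine obstacle beyond this bookkeeping.
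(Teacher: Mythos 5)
Your proof is correct, and since the paper states this corollary without proof (leaving it as a consequence of Proposition \ref{rad} and the preceding Krull--Schmidt remark), your argument supplies exactly the derivation the author must have had in mind: part (1) via $[\mathrm{Rad}(F)](G)=\mathrm{Rad}(F(G))$ and exactness of $ev_G$, giving $F(G)$ a simple top; part (2) via Yoneda to realize $P$ as a direct summand of the representable functor $\mathcal{D}(G,-)$, whose evaluation is the regular module. The only point worth flagging is that ``$F$ has a unique quotient $S$'' should be read as ``$F$ has a unique maximal subfunctor, with quotient $S$'' (equivalently, $F/\mathrm{Rad}(F)\cong S$ is simple), which is how you use it; and finite-dimensionality of $F(G)$, needed so that simple top forces indecomposability, follows from $F$ being a quotient of a finite sum of representables and each $\Hom_{\mathcal{D}}(G_i,G)$ being finite-dimensional, as you note.
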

\section{Highest weight structure of the biset functors category.}
Let us recall the famous theorem of Webb about the highest weight structure of the category of biset functors.
\begin{theo}[Theorem $7.2$ \cite{webb_strat2}]
Let $\mathcal{D}$ be an admissible subcategory of the biset category. Let $k$ be a field such that $\mathrm{char}(k)$ does not divide $|\mathrm{Out}(H)|$ for $H\in Ob(\mathcal{D})$. If $\mathcal{D}$ has a finite number of isomorphism classes of objects, then $\mathcal{F}_{\mathcal{D},k}$ is a highest weight category. 
\end{theo}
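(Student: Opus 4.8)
The plan is to reduce the statement to one about a finite-dimensional algebra and then to exhibit a heredity chain. Because $\mathcal{D}$ has only finitely many isomorphism classes of objects and each $\mathcal{D}(K,H)=kB(K,H)$ is finite-dimensional over $k$, the category algebra $A:=\bigoplus_{H,K}\mathcal{D}(K,H)$ --- the sum running over a set of representatives of the isomorphism classes of objects of $\mathcal{D}$, with product induced by composition and unit $\sum_{H}\mathrm{id}_{H}$ --- is a finite-dimensional $k$-algebra, and evaluation at this family of objects gives an equivalence between $\mathcal{F}_{\mathcal{D},k}$ and the category of $A$-modules, matching finitely generated functors with finitely generated modules. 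By the Cline--Parshall--Scott correspondence (together with Dlab--Ringel), the category $\mathcal{F}_{\mathcal{D},k}$ being a highest weight category is then equivalent to $A$ being a quasi-hereditary algebra, so it suffices to construct a chain $0=J_{0}\subseteq J_{1}\subseteq\cdots\subseteq J_{n}=A$ of idempotent ideals in which each $J_{i}/J_{i-1}$ is a heredity ideal of $A/J_{i-1}$.

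To build the chain I would enumerate the objects as $H_{1},\dots,H_{n}$ so that the total order refines the subquotient relation $\sqsubseteq$; ordering by non-decreasing group order works, since $L\sqsubset K$ forces $|L|<|K|$, while $\sqsubseteq$-incomparable groups of equal order may be listed arbitrarily. Put $e_{i}=\mathrm{id}_{H_{i}}\in A$, $f_{i}=e_{1}+\cdots+e_{i}$ and $J_{i}=Af_{i}A$; then $0=J_{0}\subseteq\cdots\subseteq J_{n}=A$ with strict inclusions, as $e_{i+1}$ vanishes on $J_{i}$. The crucial point is the structure of the corner algebra of the $i$-th layer: one checks that $\overline{e_{i}}\,(A/J_{i-1})\,\overline{e_{i}}$ equals $\mathrm{End}_{\mathcal{D}}(H_{i})$ modulo the span of the endomorphisms that factor through an object of strictly smaller order, i.e.\ the essential algebra $\hat{\mathcal{E}}_{\mathcal{D}}(H_{i})$ of $H_{i}$. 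Here one invokes Bouc's butterfly factorization of transitive bisets (\cite{bouc_biset}): a transitive $(H_{i},H_{i})$-biset either factors through a proper subquotient of $H_{i}$, or is isomorphic to $\mathrm{Iso}(\alpha)$ for some $\alpha\in\mathrm{Aut}(H_{i})$, and the bisets $\mathrm{Iso}(\alpha)$, for $\alpha$ running over $\mathrm{Out}(H_{i})$, span a subalgebra projecting isomorphically onto the quotient; hence $\hat{\mathcal{E}}_{\mathcal{D}}(H_{i})\cong k\mathrm{Out}(H_{i})$. This algebra is semisimple exactly because $\mathrm{char}(k)\nmid|\mathrm{Out}(H_{i})|$, which is where the hypothesis on $k$ is used; it also fits the parametrization of the simple functors by the pairs $(H_{i},V)$ recalled above (Theorem $4.3.10$ of \cite{bouc_biset}).

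It then remains to see that $J_{i}/J_{i-1}=(A/J_{i-1})\,\overline{e_{i}}\,(A/J_{i-1})$ is a heredity ideal of $A/J_{i-1}$ for each $i$. Once $\overline{e_{i}}(A/J_{i-1})\overline{e_{i}}$ is known to be semisimple, the relations $\bigl(J_{i}/J_{i-1}\bigr)^{2}=J_{i}/J_{i-1}$ and $\bigl(J_{i}/J_{i-1}\bigr)\,\mathrm{Rad}(A/J_{i-1})\,\bigl(J_{i}/J_{i-1}\bigr)=0$ hold automatically, and the only remaining requirement --- projectivity of $J_{i}/J_{i-1}$ as a left $A/J_{i-1}$-module --- is equivalent to the multiplication map
\[
(A/J_{i-1})\,\overline{e_{i}}\ \otimes_{\hat{\mathcal{E}}_{\mathcal{D}}(H_{i})}\ \overline{e_{i}}\,(A/J_{i-1})\ \longrightarrow\ J_{i}/J_{i-1}
\]
being an isomorphism. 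Evaluated at a pair of objects, this says that a morphism $H_{l}\to H_{j}$ which factors through $H_{i}$ but through no object of smaller order does so essentially uniquely, up to the action of $\mathrm{Out}(H_{i})$; I would prove this again from the factorization of bisets, using the admissibility of $\mathcal{D}$ to keep every piece of a factorization inside $\mathcal{D}$, and a Mackey-type computation of the composites $\mathcal{D}(H_{i},H_{j})\circ\mathcal{D}(H_{l},H_{i})$ modulo lower-order morphisms. I expect this injectivity to be the main obstacle: the semisimplicity of the strata is the conceptual heart and already yields a stratification of $\mathcal{F}_{\mathcal{D},k}$, but promoting the layers of $(J_{i})$ to genuine heredity ideals --- so that the stratification becomes a highest weight structure --- is precisely where the combinatorics of bisets has to be controlled. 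Granting this, $(J_{i})$ is a heredity chain, $A$ is quasi-hereditary, and $\mathcal{F}_{\mathcal{D},k}$ is a highest weight category.
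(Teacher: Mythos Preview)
The paper does not give its own proof of this statement: it is quoted verbatim as Theorem~7.2 of \cite{webb_strat2} and used as a black box. So there is nothing to compare your argument against here; you are in effect re-proving Webb's theorem rather than a result of the present paper.

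That said, your outline is the standard one and matches Webb's strategy: pass to the finite-dimensional category algebra, order the objects by the subquotient relation, and build a heredity chain from the idempotents $e_i=\mathrm{id}_{H_i}$. Your identification of the corner $\overline{e_i}(A/J_{i-1})\overline{e_i}$ with $k\mathrm{Out}(H_i)$ via the factorization of transitive bisets is exactly right, and this is where the characteristic hypothesis enters. You are also honest about where the real content lies: the injectivity of the multiplication map
\[
(A/J_{i-1})\,\overline{e_{i}}\ \otimes_{k\mathrm{Out}(H_i)}\ \overline{e_{i}}\,(A/J_{i-1})\ \longrightarrow\ J_{i}/J_{i-1}
\]
is the step that distinguishes a mere standard stratification from a highest weight structure, and you have only sketched how one would attack it. In Webb's treatment this is handled by a careful analysis of the ``triangular'' structure of the morphism spaces with respect to the subquotient order; your proposed Mackey-type computation is the right tool, but as written this remains a plan rather than a proof. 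If you want a self-contained argument you will need to actually carry out that computation, or cite Webb for it as the paper does.
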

The set indexing the simple functors is the set, denoted by $\Lambda$, of isomorphism classes of pairs $(H,V)$ where $H\in Ob(\mathcal{D})$ and $V$ is an $k\mathrm{Out}(H)$-simple module. Let $H$ and $K$ be two objects of $\mathcal{D}$. Then $$\bigoplus_{\underset{X\sqsubset H}{X\in \mathcal{D}}} \Hom_{\mathcal{D}}(X,K)\Hom_{\mathcal{D}}(H,X),$$ can be viewed as a submodule of $\Hom_{\mathcal{D}}(H,K)$ via composition of morphisms. We denote by $I_{\mathcal{D}}(H,K)$ this submodule and by $\overleftarrow{\Hom}_{\mathcal{D}}(H,K)$ the quotient $\Hom_{\mathcal{D}}(H,K)/I_\mathcal{D}(H,K)$. This is a natural right $k\mathrm{Out}(H)$-module. If $V$ is an $k\mathrm{Out}(H)$-module, we denote by $\Delta_{H,V}^{\mathcal{D}}$ the functor
\begin{equation*}
\Delta_{H,V}^{\mathcal{D}} := K\mapsto \overleftarrow{\Hom}_{\mathcal{D}}(H,K)\otimes_{k\mathrm{Out}(H)}V. 
\end{equation*}
When the context is clear, we simply denote by $\Delta_{H,V}$ this functor. If $(H,V)\in \Lambda$, then $\Delta_{H,V}$ is a standard object of $\mathcal{F}_{\mathcal{D},k}$. The set $\Lambda$ is ordered by $(H,V) < (K,W)$ if $K\sqsubset H$, that is if $K$ is a strict subquotient of $H$. So the highest weight structure gives the fact that the projective indecomposable biset functors have a filtration by standard functors. This filtration has the following properties:
\begin{itemize}
\item  If $P_{H,V}$ denotes a projective cover of the simple $S_{H,V}$, then $P_{H,V}$ is filtered by a finite number of standard functors. The first quotient is $\Delta_{H,V}$, which appears with multiplicity one. The other standard objects which appear as subquotients are some $\Delta_{K,W}$ for $K\sqsubset H$.
\item Moreover the standard functors have finite length. The unique simple quotient of $\Delta_{H,V}$ is the simple functor $S_{H,V}$. The other simple functors which appear as composition factor of $\Delta_{H,V}$ are some $S_{K,W}$ for $H\sqsubset K$. 
\end{itemize}
\begin{de}
Let $k$ be a field. Let $G$ be a finite group. Let $\Sigma(G)$ be the full subcategory of $\mathcal{C}_{k}$ consisting of the subquotients of $G$. Then the group $G$ is call a $\mathrm{NV}_k$-group if the simple functors $S$ of $\mathcal{F}_{\Sigma(G),k}$ do not vanish at $G$.
\end{de}
It is well known that commutative groups are $\mathrm{NV}_{k}$-groups for every field $k$ (see Proposition $3.2$ of \cite{bst2}), but there are non-commutative $\mathrm{NV}_{k}$-groups. 
\begin{theo}\label{qh}
Let $G$ be a finite group. Let $k$ be a field such that $\mathrm{char}(k)$ does not divide $|\mathrm{Out}(H)|$ for all subquotients $H$ of $G$. If $G$ is a $\mathrm{NV}_{k}$-group, then $kB(G,G)$ is a quasi-hereditary algebra. 
\end{theo}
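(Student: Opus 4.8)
The plan is to show that, under the $\mathrm{NV}_k$ hypothesis, the evaluation functor $ev_G$ is an \emph{equivalence of abelian categories} between $\mathcal{F}_{\Sigma(G),k}$ and $kB(G,G)\mb\Mod$, and then to transport Webb's highest weight structure across this equivalence. The point of the $\mathrm{NV}_k$ condition is exactly that it turns the representable functor at $G$ into a \emph{projective generator}.

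First I would set the stage. The category $\mathcal{D}:=\Sigma(G)$ is an admissible subcategory of $\mathcal{C}_k$ with only finitely many isomorphism classes of objects (the subquotients of $G$), and the assumption on $\mathrm{char}(k)$ is precisely the hypothesis of Webb's theorem recalled above, applied to $\mathcal{D}$. Hence $\mathcal{F}_{\Sigma(G),k}$ is a highest weight category. Since being a highest weight category is a property of the underlying abelian category, it will suffice to produce an equivalence $\mathcal{F}_{\Sigma(G),k}\simeq kB(G,G)\mb\Mod$: indeed $kB(G,G)$ is finite-dimensional (its transitive bisets are finite in number), so $kB(G,G)\mb\Mod$ being a highest weight category is exactly the statement that $kB(G,G)$ is quasi-hereditary.

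Next consider the representable functor $P_G:=\Hom_{\Sigma(G)}(G,-)$, which is finitely generated and projective. By the Yoneda lemma $\mathrm{End}_{\mathcal{F}_{\Sigma(G),k}}(P_G)\cong kB(G,G)^{\mathrm{op}}$, and the evaluation functor $ev_G$ is naturally isomorphic to $\Hom_{\mathcal{F}_{\Sigma(G),k}}(P_G,-)$, compatibly with module structures. The key step is the claim that $P_G$ is a projective \emph{generator} of $\mathcal{F}_{\Sigma(G),k}$. Using the existence of projective covers and the Krull--Schmidt property recalled in the excerpt, a finitely generated projective functor is a generator as soon as it surjects onto every simple functor, equivalently as soon as $\Hom_{\mathcal{F}_{\Sigma(G),k}}(P_G,S)\neq 0$ for every simple $S$. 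But by Yoneda $\Hom_{\mathcal{F}_{\Sigma(G),k}}(P_G,S)\cong S(G)$, which is nonzero for every simple $S$ precisely because $G$ is an $\mathrm{NV}_k$-group. So every simple functor occurs in $\mathrm{Top}(P_G)$, every indecomposable projective functor is a direct summand of $P_G$, and $P_G$ is a progenerator. (This is the functorial counterpart of Corollary \ref{eval}(2), which says equivalently that each $P_{H,V}(G)$ is an indecomposable projective $kB(G,G)$-module, these exhausting the indecomposable projectives of $kB(G,G)$.)

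With $P_G$ a progenerator, the classical Morita-type theorem on projective generators, applied to $ev_G\cong\Hom_{\mathcal{F}_{\Sigma(G),k}}(P_G,-)$, yields that $ev_G:\mathcal{F}_{\Sigma(G),k}\to kB(G,G)\mb\Mod$ is an equivalence of abelian categories, and the theorem follows by the reduction of the second paragraph. I expect the only genuine content to be the claim that $P_G$ is a generator: this is where the $\mathrm{NV}_k$ hypothesis is used essentially, and without it $ev_G$ merely factors through a proper quotient category, so there is no reason for $kB(G,G)$ to inherit the highest weight structure — and it need not, as the introduction notes that for the alternating group of degree $5$ in characteristic zero the algebra $kB(G,G)$ has infinite global dimension and is in particular not quasi-hereditary. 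Everything else is formal once Webb's theorem, the Yoneda lemma and the progenerator theorem are in hand.
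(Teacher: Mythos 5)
Your proof is correct, and it takes a genuinely different route from the paper's. The paper verifies the highest weight axioms for $kB(G,G)\mb\Mod$ directly: it invokes Corollary $3.3$ of \cite{bst1} to identify the simples of $kB(G,G)$ as the $S_{H,V}(G)$, uses Corollary \ref{eval} and Proposition \ref{rad} (whose hypotheses hold precisely under $\mathrm{NV}_k$) to show that evaluation preserves projective covers and the simple top of each $\Delta_{H,V}$, transports the $\Delta$-filtrations of the $P_{H,V}$ through the exact functor $ev_G$, and controls the composition factors of $\Delta_{H,V}(G)$ via Proposition $3.5$ of \cite{bst1}. You instead observe that $\mathcal{F}_{\Sigma(G),k}$ is a module category over a finite-dimensional algebra (the category algebra of $\Sigma(G)$), that $ev_G\cong\Hom(P_G,-)$, and that $\mathrm{NV}_k$ is exactly the condition $\Hom(P_G,S)\cong S(G)\neq 0$ for all simples $S$, hence exactly the condition for the representable projective $P_G$ to be a progenerator; Morita's theorem then makes $ev_G$ an equivalence, and the highest weight structure from Webb's theorem transports for free. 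Your argument is shorter and more conceptual, subsumes Corollary $3.3$ and Proposition $3.5$ of \cite{bst1} as trivial consequences of the equivalence, and isolates the exact role of $\mathrm{NV}_k$. What the paper's approach buys, in compensation, is an explicit description of the standard modules for $kB(G,G)$ as the evaluations $\Delta_{H,V}(G)$ together with their filtration combinatorics, which is then used directly in the $A_5$ computation of Section $3$; your argument yields the same description but only implicitly, through the equivalence.
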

\begin{proof}
By Corollary $3.3$ of \cite{bst1}, the simple $kB(G,G)$-modules are exactly the evaluation at $G$ of the simple biset functors $S_{H,V}\in\mathcal{F}_{\Sigma(G),k}$. Now by Corollary $\ref{eval}$, if $P_{H,V}$ is a projective cover of $S_{H,V}$ in $\mathcal{F}_{\Sigma(G),k}$, then $P_{H,V}(G)$ is a projective cover of $S_{H,V}(G)$ as $kB(G,G)$-modules. Moreover since the standard functor $\Delta_{H,V}\in \mathcal{F}_{\Sigma(G),k}$ has a simple top, its evaluation at $G$ is indecomposable. 
\newline Let $M_{0}=0\subset M_{1}\subset \cdots \subset M_{n} = P_{H,V}$ be a standard filtration of $P_{H,V}$ in $\mathcal{F}_{\Sigma(G),k}$. The evaluation functor is exact, so the $kB(G,G)$-modules $M_{i}(G)$ produce a filtration of the projective indecomposable module $P_{H,V}(G)$. Moreover the quotient $M_{i}(G)/M_{i-1}(G)=[M_{i}/M_{i-1}](G)$ is the evaluation at $G$ of a standard functor indexed by a pair $(K,W)$ such that $K\sqsubset H$. It remains to look at the composition factors of the $\Delta_{H,V}(G)$. We have:
\begin{equation*}
\Delta_{H,V}(G)/\mathrm{Rad}(\Delta_{H,V}(G)) =[\Delta_{H,V}/\mathrm{Rad}(\Delta_{H,V})](G)=S_{H,V}(G). 
\end{equation*}
Moreover by Proposition $3.5$ of \cite{bst1}, a simple $kB(G,G)$-module $S_{K,W}(G)$ is a composition factor of $\Delta_{H,V}(G)$ if and only if $S_{K,W}$ is a composition factor of $\Delta_{H,V}$. As consequence $\Delta_{H,V}(G)$ has a simple top $S_{H,V}(G)$ and the other composition factors are some $S_{K,W}(G)$ for $H\sqsubset K$. This shows that $kB(G,G)\mb\Mod$ is a highest weight category in which the standard objects are the evaluation at $G$ of the standard functors of $\mathcal{F}_{\Sigma(G),k}$. 
\end{proof}
\begin{re}
This result can be easily generalized to the algebra $\mathrm{End}_{\mathcal{D}}(G)$ when $\mathcal{D}$ a admissible subcategory of $\mathcal{C}_{k}$. If the simple functors of $\mathcal{F}_{\mathcal{D}\cap \Sigma(G),k}$ do not vanish at $G$, then $\mathrm{End}_{\mathcal{D}}(G)$ is a quasi-hereditary algebra over a suitable field. 
\end{re}
As immediate corollary, for the double Burnside algebras, we have:
\begin{coro}
Let $k$ be a field such that $\mathrm{char}(k)$ does not divide $|\mathrm{Out}(H)|$ for all subquotients $H$ of a $\mathrm{NV}_{k}$-group $G$. Then the global dimension of $kB(G,G)$ is finite.  
\end{coro}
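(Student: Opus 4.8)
The plan is to obtain this as a purely formal consequence of Theorem~\ref{qh} together with the classical homological properties of quasi-hereditary algebras. Note first that $kB(G,G)$ is finite-dimensional: the double Burnside ring $B(G,G)$ is free of finite rank over $\mathbb{Z}$, with basis the $(G\times G)$-conjugacy classes of subgroups of $G\times G$, so $kB(G,G)$ has finite $k$-dimension. Under the stated hypotheses, Theorem~\ref{qh} asserts that $kB(G,G)$ is quasi-hereditary, so the statement reduces to the well-known fact that a quasi-hereditary algebra has finite global dimension (Cline--Parshall--Scott; Dlab--Ringel).

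For completeness I would recall the argument. A quasi-hereditary algebra $A$ admits a heredity chain $0=J_0\subset J_1\subset\cdots\subset J_m=A$ of two-sided ideals such that, for each $i$, $\overline{J_i}:=J_i/J_{i-1}$ is a heredity ideal of $\overline{A}:=A/J_{i-1}$ — that is, $\overline{J_i}$ is idempotent, is projective as a left $\overline{A}$-module, and satisfies $\overline{J_i}\,\mathrm{rad}(\overline A)\,\overline{J_i}=0$. One then argues by induction on $m$: writing $J_1=AeA$ for an idempotent $e$, the algebra $eAe$ is semisimple, $AeA$ has projective dimension at most $1$ as a left $A$-module, and the standard change-of-rings comparison for the idempotent ideal $AeA$ bounds $\mathrm{gldim}(A)$ in terms of $\mathrm{gldim}(A/AeA)$. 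This yields the explicit bound $\mathrm{gldim}\,kB(G,G)\le 2|\Lambda|-2$, where $\Lambda$ indexes the simple functors of $\mathcal{F}_{\Sigma(G),k}$; in particular it is finite.

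I do not expect any real obstacle here: the only points to check are that the weight poset $\Lambda$ is finite — which holds because $G$ has finitely many subquotients up to isomorphism and, $\mathrm{char}(k)$ being coprime to each $|\mathrm{Out}(H)|$, each $k\mathrm{Out}(H)$ has finitely many simple modules — and that the highest weight structure exhibited in the proof of Theorem~\ref{qh} corresponds, via the Cline--Parshall--Scott dictionary between highest weight categories and quasi-hereditary algebras, to an honest heredity chain in $kB(G,G)$. Both are immediate, so no further biset-functor input beyond Theorem~\ref{qh} is required.
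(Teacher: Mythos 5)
Your proposal is correct and matches the paper's intent exactly: the paper states this as an ``immediate corollary'' of Theorem~\ref{qh}, implicitly invoking the standard fact (Cline--Parshall--Scott, Dlab--Ringel) that a finite-dimensional quasi-hereditary algebra has finite global dimension, bounded by $2n-2$ where $n$ is the number of simples. Your additional checks — that $kB(G,G)$ is finite-dimensional and that $\Lambda$ is finite — are both sound and are the only points of substance beyond citing the classical theorem.
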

It should now be clear that the situation will not be that simple if some simple functors vanish at $G$. Indeed let $S_{H,V}$ be a simple functor of $\mathcal{F}_{\Sigma(G),k}$ such that $S_{H,V}(G)\neq 0$. If in a standard filtration of $P_{H,V}$ there is a standard functor $\Delta_{K,W}$ such that $S_{K,W}(G)=0$, then $\Delta_{K,W}(G)$ is \emph{not} in the set of standard modules for $kB(G,G)$ that we considered in the proof of Theorem \ref{qh}. In the rest of this paper we look at the case of $G=A_5$. We first show that the situation described here actually happens for this group, and we show that there is no hope to choose a better filtration for the projective $kB(G,G)$-modules. The reason is that $kB(G,G)$ has infinite global dimension. 
\section{The example of $A_5$.}
Let $k$ be a field of characteristic different from $2,3$ and $5$. The double Burnside algebra $kB(A_5,A_5)$ is a rather complicated object. Unfortunately it seems to the Author that $G=A_5$ is the smallest (or one of the smallest) example where the situation described above can appear. Indeed, this situation requires the existence of enough non-split extensions between simple functors in $\mathcal{F}_{\Sigma(G),k}$. It is well known that this category is not semi-simple if there are some non-cyclic groups in $\Sigma(G)$ (Theorem $1.1$ \cite{barker_ss}), but as it can be seen in Proposition 11.2 of $\cite{webb_strat2}$, if the category $\Sigma(G)$ does not contain enough increasing chains (for the subquotient relation) of objects, then there are not so many non-split extensions in $\mathcal{F}_{\Sigma(G),k}$. Moreover $A_5$ is also one of the first groups where the evaluation of the radical of the standard functor is not the radical of the evaluation (see Example $13.5$ of \cite{bst1}), so it is a good candidate for our purpose.  
\newline In order to simplify the computations, we will use the following results.
\begin{itemize}
\item Let $P_{K,W}$ be a projective indecomposable functor in $\mathcal{F}_{\Sigma(G),\field }$. Let $\Delta_{J,U}$ be a standard object in this category. Then
\begin{equation}\label{bgg}
[P_{K,W} : \Delta_{J,U}] = [\nabla_{J,U} : S_{K,W}]=[\Delta_{J,U^{*}} : S_{K,W^*}].
\end{equation}
Here $\nabla_{J,U}$ denotes the co-standard functor indexed by $(J,U)$. The first equality is the so-called BGG-reciprocity and the last equality follows from the usual duality in the biset-functor category. See Paragraph $8$ of \cite{webb_strat2} for more details. Note that for $A_5$ all the $\field \mathrm{Out}(H)$-simple modules that we will consider are self-dual. 
\item If $\mathcal{D}$ is an admissible full-subcategory of $\Sigma(G)$, then there is a restriction functor from $\mathcal{F}_{\Sigma(G),\field }$ to $\mathcal{F}_{\mathcal{D},\field }$. By Proposition $7.3$ of $\cite{webb_strat2}$, if $H\in \mathcal{D}$, then we have:
\begin{equation}\label{res}
[P_{H,V}^{\Sigma(G)} : \Delta_{K,W}^{\Sigma(G)} ]_{\Sigma(G)} = [P_{H,V}^{\mathcal{D}} : \Delta_{K,W}^{\mathcal{D}}]_{\mathcal{D}}. 
\end{equation}
\end{itemize}
\begin{lemma}\label{1}
Let $\field _{-}$ be the non-trivial simple $\field \mathrm{Out}(C_3)\cong \field \mathrm{Out}(A_4)\cong \field C_2$-module. There is a non-split exact sequence of functors of $\mathcal{F}_{\Sigma(A_5),\field }$:
\begin{equation*}
\xymatrix{
0\ar[r]& \Delta_{C_3,\field _{-}}\ar[r]& P_{A_4,\field _{-}}\ar[r] & \Delta_{A_4,\field _{-}}\ar[r]&0. 
}
\end{equation*}
\end{lemma}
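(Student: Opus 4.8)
The plan is to pin down the standard filtration of $P_{A_4,\field_{-}}$ exactly, using the two tools recalled just before the statement. By the highest weight structure, $P_{A_4,\field_{-}}$ has a filtration by standard functors whose top layer is $\Delta_{A_4,\field_{-}}$, appearing once, and all of whose remaining layers are of the form $\Delta_{K,W}$ with $K$ a \emph{proper} subquotient of $A_4$, i.e. $K\in\{1,C_2,C_3,V_4\}$ and $W$ a simple $\field\mathrm{Out}(K)$-module. By the BGG-reciprocity~\eqref{bgg}, and since all the $\field\mathrm{Out}(H)$-modules occurring here are self-dual, $[P_{A_4,\field_{-}}:\Delta_{K,W}]=[\Delta_{K,W}:S_{A_4,\field_{-}}]$; and by the restriction formula~\eqref{res} applied to the admissible full subcategory $\Sigma(A_4)\subseteq\Sigma(A_5)$, all of these multiplicities may be computed inside $\mathcal{F}_{\Sigma(A_4),\field}$. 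So everything reduces to deciding, for each such $(K,W)$, whether $S_{A_4,\field_{-}}$ is a composition factor of the standard functor $\Delta_{K,W}$ of $\mathcal{F}_{\Sigma(A_4),\field}$, and with which multiplicity.

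I would then treat the four values of $K$ separately. Since the evaluation functor at $A_4$ is exact and sends distinct simple biset functors to distinct (or zero) simple $\mathrm{End}_{\Sigma(A_4)}(A_4)$-modules, and since $S_{A_4,\field_{\pm}}(A_4)=\field_{\pm}\neq 0$, the multiplicity $[\Delta_{K,W}:S_{A_4,\field_{-}}]$ equals the multiplicity of $S_{A_4,\field_{-}}(A_4)$ in $\Delta_{K,W}(A_4)=\overleftarrow{\Hom}_{\Sigma(A_4)}(K,A_4)\otimes_{\field\mathrm{Out}(K)}W$ (this is essentially Proposition $3.5$ of \cite{bst1}). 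Now the generator $\sigma$ of $\mathrm{Out}(A_4)$ acts on $S_{A_4,\field_{-}}(A_4)$ by $-1$, and $\mathrm{char}(\field)\neq 2$. For $K\in\{1,C_2\}$ one checks that $\sigma$ fixes every conjugacy class of subgroups of $A_4\times K$, hence acts trivially on $\field B(A_4,K)$ and on its quotient $\Delta_{K,\field}(A_4)$, which therefore has no subquotient isomorphic to $S_{A_4,\field_{-}}(A_4)$; thus $[\Delta_{1,\field}:S_{A_4,\field_{-}}]=[\Delta_{C_2,\field}:S_{A_4,\field_{-}}]=0$. (For $K=1$ this also follows because $\Delta_{1,\field}$ is the Burnside functor, whose composition factors over a field of characteristic zero all carry the trivial outer representation.) For $K=V_4$ this shortcut is no longer available, since $\sigma$ genuinely permutes the conjugacy classes of the ``twisted'' Klein-four subgroups of $A_4\times V_4$; here one must compute $\overleftarrow{\Hom}_{\Sigma(A_4)}(V_4,A_4)$ as a right $\field\mathrm{Out}(V_4)\cong\field S_3$-module and verify directly that, for each of the three simple $\field S_3$-modules $W$, the module $\Delta_{V_4,W}(A_4)$ has no composition factor $S_{A_4,\field_{-}}(A_4)$. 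Finally, for $K=C_3$, the outer automorphism $\sigma$ induces on $A_4/V_4\cong C_3$ precisely the non-trivial element of $\mathrm{Out}(C_3)$, so the inflation morphism $C_3\to A_4$ contributes to the $\field_{-}$-isotypic part of $\overleftarrow{\Hom}_{\Sigma(A_4)}(C_3,A_4)$ and not to the $\field_{+}$-isotypic part; combining this with the known dimensions of $S_{C_3,\field_{+}}(A_4)$ and $S_{C_3,\field_{-}}(A_4)$ one gets $[\Delta_{C_3,\field_{-}}:S_{A_4,\field_{-}}]=1$ and $[\Delta_{C_3,\field_{+}}:S_{A_4,\field_{-}}]=0$.

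Collecting these computations, the only non-zero multiplicities are $[P_{A_4,\field_{-}}:\Delta_{A_4,\field_{-}}]=[P_{A_4,\field_{-}}:\Delta_{C_3,\field_{-}}]=1$, so the standard filtration of $P_{A_4,\field_{-}}$ has exactly two layers. As the highest weight structure forces the top layer to be $\Delta_{A_4,\field_{-}}$, this is exactly the asserted short exact sequence $0\to\Delta_{C_3,\field_{-}}\to P_{A_4,\field_{-}}\to\Delta_{A_4,\field_{-}}\to 0$. It cannot split: $P_{A_4,\field_{-}}$ is a projective cover of the simple functor $S_{A_4,\field_{-}}$, hence indecomposable, whereas a splitting would yield $P_{A_4,\field_{-}}\cong\Delta_{A_4,\field_{-}}\oplus\Delta_{C_3,\field_{-}}$. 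The serious obstacle is the case $K=V_4$: ruling out $S_{A_4,\field_{-}}$ as a composition factor of the three standard functors $\Delta_{V_4,W}$ (and, to a lesser extent, keeping exact track of multiplicities in the case $K=C_3$) is where the real work lies.
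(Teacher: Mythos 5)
Your reduction is the one the paper uses: BGG reciprocity $(\ref{bgg})$ together with the restriction formula $(\ref{res})$ to pass to $\mathcal{F}_{\Sigma(A_4),\field}$, then Proposition $3.5$ of \cite{bst1} to convert the question ``is $S_{A_4,\field_-}$ a composition factor of $\Delta_{K,W}$?'' into a question about the $\field B(A_4,A_4)$-module $\Delta_{K,W}(A_4)$, followed by a case-by-case analysis over the subquotients $K$ of $A_4$. Two small differences from the paper's proof are actually improvements: you dismiss $\Delta_{A_4,\field}$ directly from the strict-subquotient requirement in the partial order, where the paper gives a (logically superfluous) explicit check; and your observation that $\mathrm{Iso}(\sigma)$ acts trivially on $\Delta_{K,\field}(A_4)$ for $K\in\{1,C_2\}$ (since $\sigma$ fixes every conjugacy class of subgroups of $A_4\times K$) but by $-1$ on $S_{A_4,\field_-}(A_4)$ is a clean uniform argument, where the paper instead invokes Bouc's $B$-group description of the Burnside functor for $K=1$ and a separate statement for $K=C_2$.

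That said, your proposal is not yet a proof, for the reason you yourself flag. The actual content of the lemma is the two cases you defer: showing that none of $\Delta_{V_4,\field}$, $\Delta_{V_4,\field_-}$, $\Delta_{V_4,V}$ has $S_{A_4,\field_-}$ as a composition factor, and that $\Delta_{C_3,\field_-}$ has it with multiplicity exactly one while $\Delta_{C_3,\field}$ does not. The paper disposes of these by simply recording the $\field B(A_4,A_4)$-module structure of the evaluations $\Delta_{K,W}(A_4)$ for $K\in\{C_2,V_4,C_3\}$: in particular the $\Delta_{V_4,W}(A_4)$ are simple or zero, and $\Delta_{C_3,\field_{\pm}}(A_4)$ is a non-split two-step extension with one factor $S_{A_4,\field_{\pm}}(A_4)$. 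Your remark that $\sigma$ induces the non-trivial outer automorphism of $A_4/V_4\cong C_3$ is the right intuition for why the sign is carried over in the $C_3$ case, but the multiplicity-one claim (and the absence of $S_{A_4,\field_-}$ from $\Delta_{C_3,\field}$) still need the evaluation to be worked out, as does the whole of the $V_4$ case --- e.g. one must actually determine $\overleftarrow{\Hom}_{\Sigma(A_4)}(V_4,A_4)$ as a right $\field S_3$-module. Once those evaluations are supplied, the rest of your argument, including the non-splitness deduced from indecomposability of the projective cover, is correct and matches the paper.
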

\begin{proof}
We know that $P_{A_4,\field _{-}}$ has a finite $\Delta$-filtration with quotient $\Delta_{A_4,\field _{-}}$. We need to understand the other standard quotients of such a filtration. By the highest weight's structure of $\mathcal{F}_{\Sigma(A_5),k}$, such a standard quotient must be indexed by a subquotient of $A_4$. By using the BGG-reciprocity $(\ref{bgg})$ and formula $(\ref{res})$, a standard functor $\Delta_{H,V}$ appears in $P_{A_4,\field _{-}}$ if and only if $S_{A_4,\field _{-}}$ is a composition factor of $\Delta_{H,V}$ in $\mathcal{F}_{\Sigma(A_4),k}$. Using Proposition $3.5$ of \cite{bst1}, this is equivalent to the fact that $S_{A_4,k_{-}}(A_4)$ is a composition factor of $\Delta_{H,V}(A_4)$. As immediate consequence we have:
\begin{itemize}
\item $\Delta_{1,\field }$ is not in a $\Delta$-filtration of $P_{A_4,\field _{-}}$. Indeed $\Delta_{1,\field }$ is isomorphic to $\field B$, the usual Burnside functor. By the work of Bouc (see Section $5.4$ and $5.5$ of \cite{bouc_biset}), the simple subquotients of $\field B$ are the $S_{H,\field }$ for a $B$-group $H$. As consequence, the simple functor $S_{A_4,\field _{-}}$ is not a subquotient of $\field B$. 
\item $\Delta_{A_{4},\field }$ is not a subquotient of $P_{A_4,\field _{-}}$. Indeed, the only composition factor of $\Delta_{A_4,k}$ with $A_4$ as minimal group if $S_{A_4,k}$. 
\end{itemize}
We have the following: the subquotients of $A_4$ are : $A_4,V_4,C_3,C_2,1$.
\begin{enumerate}
\item $\mathrm{Out}(C_2)\cong 1$ and we have $\Delta_{C_2,k}(A_4)\cong S_{C_2,k}(A_4)$. 
\item $\mathrm{Out}(V_4)\cong S_3$. So there are three $k\mathrm{Out}(V_4)$-simple modules. We denote by $k$ the trivial module and $k_{-}$ the sign. Finally, we denote by $V$ the simple module of dimension $2$. Then we have: $\Delta_{V_4,k}(A_4)\cong S_{A_4,k}(A_4)$, $\Delta_{V_4,k_{-}}(A_4)\cong S_{A_4,k_{-}}(A_4)$ and $\Delta_{V_4,V}(A_4)=0$. 
\item $\mathrm{Out}(C_3)\cong C_2$ so there are two simple $k\mathrm{Out}(C_3)$-modules. We denote by $k$ the trivial module and $k_{-}$ the non trivial simple module. Then $\Delta_{C_3,k}(A_4)$ is a non-split extension between $S_{A_4,k}(A_4)$ and $S_{C_3,k}(A_4)$ and $\Delta_{C_3,k_{-}}(A_4)$ is a non-split extension between $S_{A_4,k_{-}}(A_4)$ and $S_{C_3,k_{-}}(A_4)$
\end{enumerate}
So the only standard functors which appear in a standard filtration of $P_{A_4,k_{-}}$ in $\mathcal{F}_{\Sigma(A_5),k}$ are $\Delta_{A_4,k_{-}}$ and $\Delta_{C_3,k_{-}}$. The structure of the highest weight category implies that $\Delta_{C_3,k_{-}}$ must be a subfunctor of $P_{A_4,k_{-}}$ and $\Delta_{A_4,k_{-}}$ must be a quotient of this functor. 
\end{proof}
Now we need to understand the evaluation at $A_5$ of $P_{A_4,k_{-}}$. 
\begin{lemma}\label{2}
\begin{itemize}
\item $\Delta_{A_4,\field _{-}}(A_5)\cong S_{A_4,\field _{-}}(A_5)\neq 0$.
\item $\Delta_{C_3,\field _{-}}(A_5)\cong S_{A_4,\field _{-}}(A_5)\neq 0$. 
\end{itemize}
\end{lemma}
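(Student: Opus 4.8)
The plan is to compute the two evaluations directly from the explicit form of the standard functors, $\Delta_{H,V}^{\Sigma(A_5)}(K)=\overleftarrow{\Hom}_{\Sigma(A_5)}(H,K)\otimes_{\field\mathrm{Out}(H)}V$, to see that both $\Delta_{A_4,\field_-}(A_5)$ and $\Delta_{C_3,\field_-}(A_5)$ are one-dimensional, and then to identify them as $\field B(A_5,A_5)$-modules using Proposition~\ref{rad}, Lemma~\ref{1}, the BGG reciprocity~(\ref{bgg}) and Proposition~$3.5$ of~\cite{bst1}.

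First I would determine the right $\field\mathrm{Out}(H)$-module $\overleftarrow{\Hom}_{\Sigma(A_5)}(H,A_5)$ for $H\in\{C_3,A_4\}$. A $\field$-basis consists of the transitive $(A_5,H)$-bisets which do not factor through a proper subquotient of $H$; by the classification of transitive bisets these are the conjugacy classes of subgroups $L\leq A_5\times H$ that project onto $H$ and meet $\{1\}\times H$ trivially, equivalently the pairs $(U/T,\varphi)$ where $U/T$ is a subquotient of $A_5$ isomorphic to $H$ and $\varphi:U/T\to H$ is an isomorphism, taken up to $A_5\times H$-conjugacy. For $H=A_4$ the only such subquotients are the five subgroups isomorphic to $A_4$, forming a single conjugacy class with $N_{A_5}(A_4)=A_4$, so that $N_{A_5}(A_4)$ acts on the isomorphisms only through inner automorphisms; hence $\overleftarrow{\Hom}_{\Sigma(A_5)}(A_4,A_5)\cong\field\mathrm{Out}(A_4)\cong\field C_2$ (free of rank one) and $\Delta_{A_4,\field_-}(A_5)\cong\field C_2\otimes_{\field C_2}\field_-\cong\field$. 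For $H=C_3$ there are two families: the ten subgroups isomorphic to $C_3$ (one conjugacy class, with $N_{A_5}(C_3)\cong S_3$ containing an element inverting $C_3$, contributing a one-dimensional summand with trivial $\mathrm{Out}(C_3)$-action) and the subquotients $A_4/V_4\cong C_3$ (one conjugacy class, on which $N_{A_5}(A_4)=A_4$ acts trivially because $A_4\to\mathrm{Aut}(A_4/V_4)$ factors through $A_4/V_4\cong C_3$, contributing a free rank-one summand). Therefore $\overleftarrow{\Hom}_{\Sigma(A_5)}(C_3,A_5)\cong\field\oplus\field C_2$ as a right $\field C_2$-module, and since $\mathrm{char}(\field)\neq 2$ we obtain $\Delta_{C_3,\field_-}(A_5)\cong(\field\otimes_{\field C_2}\field_-)\oplus(\field C_2\otimes_{\field C_2}\field_-)\cong\field$. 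Both modules are one-dimensional.

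Next I would identify them. By Proposition~\ref{rad} the module $\Delta_{A_4,\field_-}(A_5)$ has simple top $S_{A_4,\field_-}(A_5)$, so being one-dimensional it equals $S_{A_4,\field_-}(A_5)$ as soon as we know $S_{A_4,\field_-}(A_5)\neq 0$. Granting this, the second statement follows: Lemma~\ref{1} gives $[P_{A_4,\field_-}:\Delta_{C_3,\field_-}]=1$, hence by~(\ref{bgg}) and the self-duality of $\field_-$ one has $[\Delta_{C_3,\field_-}:S_{A_4,\field_-}]=1$, so $S_{A_4,\field_-}$ occurs as a composition factor of $\Delta_{C_3,\field_-}$; by Proposition~$3.5$ of~\cite{bst1} the non-zero simple module $S_{A_4,\field_-}(A_5)$ is then a composition factor of $\Delta_{C_3,\field_-}(A_5)$, which, being one-dimensional, must coincide with it. In passing this forces $S_{C_3,\field_-}(A_5)=0$ — the vanishing at the root of the example — and it accounts for the discrepancy recorded in Example~$13.5$ of~\cite{bst1}, since then $[\mathrm{Rad}(\Delta_{C_3,\field_-})](A_5)=\Delta_{C_3,\field_-}(A_5)\neq 0=\mathrm{Rad}(\Delta_{C_3,\field_-}(A_5))$.

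The remaining input, which I expect to be the main obstacle, is the non-vanishing $S_{A_4,\field_-}(A_5)\neq 0$. Because the only composition factors of $\Delta_{A_4,\field_-}$ other than $S_{A_4,\field_-}$ are $S_{A_5,\field}$ and $S_{A_5,\field_-}$, and both of these are non-zero at $A_5$, this non-vanishing is equivalent (via~(\ref{bgg})) to $\Delta_{A_4,\field_-}$ being a simple functor, that is, to $\Delta_{A_4,\field_-}$ not appearing in a standard filtration of $P_{A_5,\field}$ or of $P_{A_5,\field_-}$. I would establish it either by computing $S_{A_4,\field_-}(A_5)$ directly from Bouc's description of the simple biset functors — the point being that the outer automorphism of $A_4$ is not realised by conjugation inside $A_5$, so that the one-dimensional space $\Delta_{A_4,\field_-}(A_5)$ is not annihilated on passing to the simple quotient — or by invoking the known value of $S_{A_4,\field_-}(A_5)$ from the literature. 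Everything else is bookkeeping with the parametrisation of bisets and the highest-weight formalism.
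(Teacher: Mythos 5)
Your computation of the two evaluations as one-dimensional $\field$-spaces is correct and in fact more detailed than the paper, which simply asserts these dimensions; and your deduction of the second bullet from the first via BGG reciprocity and Proposition~$3.5$ of \cite{bst1} is exactly the paper's argument. But you have flagged $S_{A_4,\field_-}(A_5)\neq 0$ as ``the main obstacle'' and then not actually proved it: you offer two possible routes (unwind Bouc's description of the simple functors, or cite a known value) without carrying either out, and the heuristic you give --- that $\mathrm{Out}(A_4)$ is not realised by conjugation in $A_5$ --- is exactly the fact that makes $\Delta_{A_4,\field_-}(A_5)$ one-dimensional rather than zero, but it does not by itself show that the quotient map $\Delta_{A_4,\field_-}(A_5)\twoheadrightarrow S_{A_4,\field_-}(A_5)$ is nonzero.

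The paper closes this gap with a short self-contained argument that you should supply. Since $\Delta_{A_4,\field_-}(A_5)$ is one-dimensional it is a simple $\field B(A_5,A_5)$-module, hence of the form $S_{H,V}(A_5)$, and by the standard filtration the only candidates are $(H,V)=(A_4,\field_-)$ or $(H,V)=(A_5,W)$. Now the element $\Ind_{A_4}^{A_5}\Res^{A_5}_{A_4}$ acts as the identity on the generator $\Ind_{A_4}^{A_5}\otimes 1$: by the Mackey formula $\Res^{A_5}_{A_4}\Ind_{A_4}^{A_5}$ is the identity plus a term factoring through $C_3\sqsubset A_4$, and the latter dies in $\overleftarrow{\Hom}_{\Sigma(A_5)}(A_4,A_5)$. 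So this one-dimensional simple module admits a morphism factoring nontrivially through $A_4$, which rules out a minimal group $A_5$; the only remaining possibility is $(H,V)=(A_4,\field_-)$, and in particular $S_{A_4,\field_-}(A_5)\neq 0$. Without some such argument (or an explicit citation for the value of $S_{A_4,\field_-}(A_5)$), the first bullet --- and with it the second --- is not established.
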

\begin{proof}
The first isomorphism follows from the fact that $\Delta_{A_4,k_{-}}(A_5)$ is one dimensional, with basis $\mathrm{Ind}_{A_4}^{A_5}\otimes 1$. So it is a simple $kB(A_5,A_5)$-module of the form $S_{H,V}(G)$. The element $\Ind_{A_4}^{A_5}\Res^{A_5}_{A_4}$ acts by $1$ on $\Delta_{A_4,k_{-}}(A_5)$. So the minimal group $H$ is smaller than $A_4$. By the highest-weight structure of $\mathcal{F}_{\Sigma(A_5),k}$, the only possibility is to have $H=A_4$ and $V=k_{-}$. 
\newline We know that $\Delta_{C_3,k_-}$ is a subquotient of $P_{A_4,k_-}$, so $S_{A_4,k_-}$ is a composition factor of $\Delta_{C_3,k_-}$ by the BGG-reciprocity $(\ref{bgg})$. Since $S_{A_4,k_-}(A_5)\neq 0$, this simple module is a composition factor of $\Delta_{C_3,k_-}(A_5)$. Since we have $\dim_k \Big(\Delta_{C_3,k_-}(A_5)\Big)=1$, the result follows.  
\end{proof}
\begin{prop}
Let $G=A_5$ be the alternating group of degree $5$. Let $k$ be a field of characteristic different from $2,3$ and $5$. Then $\field B(G,G)$ has infinite global dimension. In particular $kB(G,G)$ is \emph{not} a quasi-hereditary algebra. 
\end{prop}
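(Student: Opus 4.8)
The strategy is to evaluate the short exact sequence of Lemma~\ref{1} at $G=A_5$ and to show that the resulting module is $\Omega$-periodic, which immediately forces infinite projective dimension. First I would apply the exact evaluation functor $ev_{A_5}$ to the sequence of Lemma~\ref{1}; by Lemma~\ref{2} both outer terms become isomorphic to the nonzero simple module $S:=S_{A_4,\field _{-}}(A_5)$, so one obtains a short exact sequence of $\field B(A_5,A_5)$-modules
\begin{equation*}
0\longrightarrow S\longrightarrow P_{A_4,\field _{-}}(A_5)\longrightarrow S\longrightarrow 0.
\end{equation*}
Write $P:=P_{A_4,\field _{-}}(A_5)$. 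Since $\mathrm{Top}(P_{A_4,\field _{-}})=S_{A_4,\field _{-}}$ and $S_{A_4,\field _{-}}(A_5)=S\neq 0$ by Lemma~\ref{2}, Corollary~\ref{eval}(2) applies and shows that $P$ is an indecomposable projective $\field B(A_5,A_5)$-module. As $\field B(A_5,A_5)$ is a finite-dimensional algebra, $P$ has a simple top, and the surjection $P\twoheadrightarrow S$ above identifies $P$ with the projective cover of $S$.

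Next I would read off the Loewy structure of $P$ from that sequence. It shows that $P$ has exactly two composition factors, both isomorphic to $S$. The submodule $S\hookrightarrow P$ is proper (its cokernel is $S\neq 0$), hence contained in $\mathrm{Rad}(P)$; since $P/\mathrm{Rad}(P)\cong S$ has length $1$ while $P$ has length $2$, this forces $\mathrm{Rad}(P)\cong S$. Thus $P$ is uniserial of length $2$ with both Loewy layers equal to $S$.

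Finally I would assemble the minimal projective resolution of $S$. Since $\mathrm{Rad}(P)\cong S$, the minimal first syzygy satisfies $\Omega S\cong S$, so repeatedly covering by the projective cover $P\twoheadrightarrow S$ yields the periodic minimal projective resolution
\begin{equation*}
\cdots\longrightarrow P\longrightarrow P\longrightarrow P\longrightarrow S\longrightarrow 0,
\end{equation*}
which never terminates. Hence $\mathrm{pd}_{\field B(A_5,A_5)}(S)=\infty$ (equivalently $\mathrm{Ext}^{n}_{\field B(A_5,A_5)}(S,S)\neq 0$ for every $n\geq 0$), so $\field B(A_5,A_5)$ has infinite global dimension. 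Since every quasi-hereditary algebra has finite global dimension, $\field B(A_5,A_5)$ cannot be quasi-hereditary, which is the assertion of the Proposition.

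As to where the difficulty lies: the genuinely delicate work --- constructing the non-split extension of Lemma~\ref{1} and computing the one-dimensional evaluations of Lemma~\ref{2} --- has already been carried out, so this last step is essentially bookkeeping. The one point that needs care is the appeal to Corollary~\ref{eval}, where the non-vanishing $S_{A_4,\field _{-}}(A_5)\neq 0$ is exactly what guarantees that $P_{A_4,\field _{-}}(A_5)$ remains indecomposable and projective after evaluation; without it the periodicity argument would collapse.
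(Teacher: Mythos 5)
Your proof is correct and takes essentially the same route as the paper: evaluate the short exact sequence from Lemma~\ref{1} at $A_5$, use Lemma~\ref{2} to identify both ends with the simple module $S=S_{A_4,\field_-}(A_5)$, invoke Corollary~\ref{eval} to see that $P_{A_4,\field_-}(A_5)$ is the indecomposable projective cover of $S$, and conclude that it is a non-split self-extension of $S$. You merely spell out the last step more explicitly --- $\Omega S\cong S$ giving a periodic minimal projective resolution --- where the paper leaves this implication implicit.
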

\begin{proof}
By using Lemma \ref{1}, we know that $P_{A_4,\field _{-}}$ has a $\Delta$-filtration with $\Delta_{A_4,\field _{-}}$ as quotient and $\Delta_{C_3,\field _{-}}$ as subfunctor. Since the simple quotient of $P_{A_4,\field_{-}}$ does not vanish at $G$, then $P_{A_4,\field_{-}}(A_5)$ is a projective cover of $S_{A_4,k_{-}}(A_5)$. By using Lemma \ref{2}, we see that this projective indecomposable functor is a non-split extension between $S_{A_4,\field_{-}}(A_5)$ and itself. 
\end{proof}

\begin{thebibliography}{1}

\bibitem{barker_ss}
L.~Barker.
\newblock Rhetorical biset functors, rational p-biset functors and their
  semisimplicity in characteristic zero.
\newblock {\em J. Algebra}, 319(9):3810 -- 3853, 2008.

\bibitem{bouc_biset}
S.~Bouc.
\newblock {\em Biset {F}unctors for {F}inite {G}roups}, volume 1990 of {\em
  Lecture Notes in Mathematics}.
\newblock Springer, 2010.

\bibitem{bst1}
S.~Bouc, R.~Stancu, and J.~Th{\'e}venaz.
\newblock Simple biset functors and double {B}urnside ring.
\newblock {\em J. Pure Appl. Algebra}, 217(3):546--566, 2013.

\bibitem{bst2}
S.~Bouc, R.~Stancu, and J.~Th{\'e}venaz.
\newblock Vanishing evaluations of simple functors.
\newblock {\em J. Pure Appl. Algebra}, 218(2):218--227, 2014.

\bibitem{krause_ks}
H.~Krause.
\newblock {K}rull-{R}emak-{S}chmidt categories and projective covers.
\newblock \url{http://www.math.uni-bielefeld.de/~hkrause/krs.pdf}.

\bibitem{webb_strat2}
P.~Webb.
\newblock Stratifications and {M}ackey functors {II}: globally defined {M}ackey
  functors.
\newblock {\em J. K-Theory}, 6(1):99--170, 2010.

\end{thebibliography}

\par\noindent
{Baptiste Rognerud\\
EPFL / SB / MATHGEOM / CTG\\
Station 8\\
CH-1015 Lausanne\\
Switzerland\\
e-mail: baptiste.rognerud@epfl.ch}
\end{document}